\documentclass[11pt]{amsart}
\usepackage{epsfig}
\usepackage{amsmath}
\usepackage{amsthm, amssymb, amscd}
\DeclareSymbolFont{AMSb}{U}{msb}{m}{n}
\DeclareMathSymbol{\N}{\mathbin}{AMSb}{"4E}
\DeclareMathSymbol{\Z}{\mathbin}{AMSb}{"5A}
\DeclareMathSymbol{\R}{\mathbin}{AMSb}{"52}
\DeclareMathSymbol{\Q}{\mathbin}{AMSb}{"51}
\DeclareMathSymbol{\I}{\mathbin}{AMSb}{"49}
\DeclareMathSymbol{\C}{\mathbin}{AMSb}{"43}

    \setcounter{topnumber}{2}
    \setcounter{bottomnumber}{2}
    \setcounter{totalnumber}{4}     
    \setcounter{dbltopnumber}{2}    


\begin{document}

\title{Forming the Borromean Rings out of arbitrary polygonal unknots}
  \date{\today}
\author[H.N.\ Howards]{Hugh Nelson Howards}
    \subjclass{57M25}
    
    \keywords{Borromean Rings, Brunnian Links, stick knots, polygonal knots}
\address{Department of Mathematics, Wake Forest University, Winston-Salem, NC 27109, USA}
\maketitle

\newtheorem{definition}{Definition}[section]
\newtheorem{corollary}[definition]{Corollary}
\newtheorem{conjecture}[definition]{Conjecture}
\newtheorem{cor}[definition]{Corollary}
\newtheorem{theorem}[definition]{Theorem}
\newtheorem{lemma}[definition]{Lemma}
\newtheorem{claim}[definition]{Claim}
\newtheorem{ex}[definition]{Example}
\newtheorem{q}[definition]{Question}
\newtheorem{exer}[definition]{Exercise}
\newtheorem{proposition}[definition]{Proposition}

\newcommand{\bi}{\begin{itemize}}
\newcommand{\ei}{\end{itemize}}
\newcommand{\be}{\begin{enumerate}}
\newcommand{\ee}{\end{enumerate}}
\newcommand{\ds}{\displaystyle}
\newcommand{\hs}{Heegaard splitting}
\newcommand{\tp}{thin position}
\newcommand{\sud}{strict upper disk}
\newcommand{\sld}{strict lower disk}
\newcommand{\db}{disk busting}
\newcommand{\si}{strongly irreducible}
\newcommand{\dbc}{double branched cover}
\newcommand{\ul}{\underline}
\newcommand{\po}{\Pi_{\omega}}

\begin{abstract}
We prove the perhaps surprising result that given any three polygonal unknots in $\R^3$, then we may form the Borromean rings out of them through rigid motions of $\R^3$ applied to the individual components together with possible scaling of the components.  We also prove that if at least two of the unknots are planar, then we do not need scaling.  This is true even for a set of three polygonal unknots that are arbitrarily close to three circles, which themselves cannot be used to form the Borromean Rings.
\end{abstract}

\section{Introduction}
\begin{figure}
	\centering
	\includegraphics[width=0.35\textwidth]{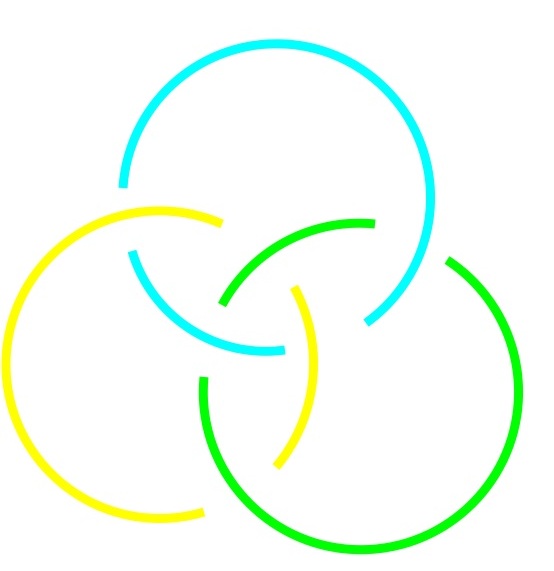}
	\caption{The Borromean Rings.}
	\label{fig:br}

	\end{figure}

The Borromean Rings in Figure~\ref{fig:br} appear to be made out of circles, but a result of Freedman and Skora shows that this is an optical illusion (see \cite{F} or \cite{H1}).
The Borromean Rings are a special type of Brunninan Link: a link of $n$ components is one which is not an unlink, but for which every sublink of $n-1$ components is an unlink.  There are an infinite number of distinct Brunnian links of $n$ components for $n \geq 3$, but the Borromean Rings are the most famous example.

This fact that the Borromean Rings cannot be formed from three circles often comes as a surprise, but then we come to the contrasting result that although it cannot be built out of circles, the Borromean Rings can be built out of certain sets of convex curves.  For example, one can form it from two circles and an ellipse.  Although it is only one out of an infinite number of Brunnian links of three components, it is the only one which can be built out of convex components \cite{H1}.  The convexity result is, in fact a bit stronger and shows that no 4 component Brunnian link can be made out of convex components and Davis generalizes this result to 5 components in \cite{D}.

While this shows it is in some sense hard to form most Brunnian links out of certain shapes, the Borromean Rings leave some flexibility.  This leads to the question of what shapes can be used to form the Borromean Rings and the following surprising conjecture of Matthew Cook at the California Institute of Technology.

\begin{conjecture} (Cook)
Given any three unknotted simple closed curves in $\R^3$, they can always be arranged to form the Borromean Rings unless they are all circles. \cite{Cook}

\label{conj:cook}
\end{conjecture}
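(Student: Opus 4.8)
The plan is to treat Cook's conjecture as the problem of removing the two hypotheses under which the polygonal theorem is proved, namely the restriction to polygons and the use of scaling. The first restriction is the easier one: every tame simple closed curve can be uniformly approximated, arbitrarily closely, by an inscribed polygonal unknot, and the Borromean property of a three-component configuration depends only on its ambient isotopy class. So if I can place three fine polygonal approximants into the Borromean rings with a definite minimal separation $\delta$ between strands, then any approximation finer than $\delta$ can be isotoped back onto the original curves without changing the link type. The real content is therefore to build the Borromean rings from arbitrary \emph{non-circular} unknots using rigid motions alone.

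For that I would work from a fixed ``three staples'' template in $\R^3$: three long, thin unknotted curves directed along the three coordinate axes and interlocked in the standard alternating Brunnian pattern, a configuration that can be checked by hand to be the Borromean rings. The task is then to seat each given curve $C_i$ into one staple slot by a rigid motion only. I would align a longest chord (a diameter) of $C_i$ with the relevant axis, so that $C_i$ contributes a long arc spanning the threading box, and then argue that the two ``outgoing'' strands together with the ``turn-around'' arc at the far end can be swung entirely out of the central threading region. The decisive use of non-circularity is quantitative: for a curve that is not a round circle, the diameter strictly exceeds the width measured perpendicular to it, and this gap supplies exactly the aspect ratio that lets the two strands of a staple stay separated while keeping the turn-around arcs clear. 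When two of the curves are planar, their flatness eliminates any interference in the third coordinate direction, which is why, as in the paper's refinement, no scaling is needed in that case.

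The consistency of the circle exception is what makes this approach plausible rather than self-contradictory. If I were to apply it to polygonal approximants of three circles, the non-circularity margin on which the construction depends would shrink to zero as the approximation refines, so the required rigid placement degenerates and cannot be passed to the limit; in the exact circle case the diameter equals the width in every direction, the staple collapses to a symmetric lens, and the configuration falls precisely into the Freedman--Skora obstruction. Thus the exception ``unless they are all circles'' is forced by the very mechanism that powers the construction, and the approximation argument of the first paragraph does not secretly prove the false statement about three circles.

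The step I expect to be the main obstacle is exactly the no-scaling, arbitrary-shape seating. For a polygon the finitely many straight edges make the turn-around arcs explicit and easy to control, but a general tame unknot can oscillate on every scale and can be badly non-convex, so the hard part is proving a \emph{uniform} separation bound: that the portion of $C_i$ lying outside a neighborhood of its diameter can be rotated and translated --- never shrunk --- so as to avoid the threading boxes of the other two components. Establishing such a bound in terms of an intrinsic, scale-invariant measure of how far $C_i$ is from a circle, and doing so robustly enough to survive the polygonal approximation of the first paragraph, is where I expect the genuine difficulty to lie.
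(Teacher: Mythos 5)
You have set out to prove a statement that the paper itself does not prove: this is Conjecture~\ref{conj:cook}, stated as an open conjecture, and the paper's actual theorems cover only polygonal unknots (Theorem~\ref{thm:poly}, which needs scaling, and Theorem~\ref{thm:polyplanar}, which needs two planar components). Worse, the final section of the paper proposes a likely \emph{counterexample} to the conjecture --- two large circles plus an unknot assembled from arcs of meridian circles on a torus (Figure~\ref{fig:untorus}) --- and states the author's own conjecture contradicting Cook's. Your first-paragraph reduction is precisely the step the paper shows cannot work: formability of the Borromean rings by rigid motions is \emph{not} stable under $C^0$ approximation. The paper's corollary to Theorem~\ref{thm:polyplanar} exhibits polygonal unknots in arbitrarily small tubular neighborhoods of three round circles that do form the rings, while the circles themselves cannot (Freedman--Skora \cite{F}); so the property neither passes to limits nor pulls back from approximants without a uniform separation bound. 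No such bound exists in the polygonal construction: the strand separations there are governed by the tiny shifts $\epsilon*{\vec w}$, $\delta*{\vec v}$, $\rho*{\vec u}$ near a single corner and shrink as the approximation refines. Your closing paragraph concedes that this uniform bound is the missing ingredient, but that bound \emph{is} the conjecture, not a technical remainder --- and the paper's torus example is engineered to defeat exactly such local seating arguments, since every arc of that unknot exposed to the other components is an arc of a circle, so any construction interacting with the curves only through local geometry sees three circles.

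The quantitative mechanism you propose is also wrong on its own terms. The claim that a non-circular curve has diameter strictly exceeding its perpendicular width fails for non-circular curves of constant width (e.g., a smoothly rounded Reuleaux triangle boundary), for which diameter and width agree in every direction; by your own description the staple then ``collapses to a symmetric lens'' for such curves exactly as for circles, so your method would misclassify them as exceptions, contrary to what the conjecture asserts. Moreover, your seating argument consumes non-circularity of \emph{each} component, whereas Cook's hypothesis excludes only the case that all three are circles: the configuration of two circles and one ellipse must be realizable (and is, by \cite{H1}), so per-component non-circularity is the wrong hypothesis to build on. In short: the approximation step fails for a reason the paper makes explicit, the non-circularity gap is not a valid quantitative invariant, and the statement you are trying to prove is one the paper presents evidence against rather than for.
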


In this paper we show that any three polygonal unknots (consisting of straight edges meeting at a set of vertices) can be used to form the Borromean Rings through rigid transformations  of the components in $\R^3$ together with scaling of $\R^3$ applied to the individual components.  Note that since any knot can be approximated with a polygonal knot that is arbitrarily close to it, any set of three unknots comes arbitrarily close to forming the Borromean rings: even three circles which themselves cannot form the Borromean Rings. 

While the first few sections prove that many sets of unknots do satisfy Cook's conjecture we conclude in the final section with a possible counter-example to Cook's conjecture.

\section{The main theorems}

To prove our theorems we will need the following lemma about unknots and the disks they bound.  Throughout the paper, $K_i$ will bound a disk $D_i$ and we will abuse notation by preserving the names $K_i$ and $D_i$ even after rigid motions or scaling of the components.   When we refer to a disk or sub-disk as flat or planar, we mean that it is a subset of a flat plane.

For each polygonal unknot $K_i$ 
 we pick can pick an extremal vertex  (a vertex that is a global maximum with respect to some direction vector) which we will call $v_i$.  The edges adjacent to $v_i$ will be called $e_i$ and $f_i$.

\begin{lemma}
If $v_i$ is a unique global maximum for an unknot $K_i$ (with respect to some direction vector) then we may choose a disk $D_i$ whose boundary is $K_i$ and which also has the point $v_i$ as its unique global maximum in the same direction.
\label{lemma:extremaldisks}
\end{lemma}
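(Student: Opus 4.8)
The plan is to construct $D_i$ explicitly as the union of two pieces: a flat triangle carrying the top vertex $v_i$, and a spanning disk for the rest of the curve that lies entirely below $v_i$. Write $h$ for the affine height function in the given direction and put $M=h(v_i)$. By hypothesis $v_i$ is the unique global maximum of $K_i$, so every other point of $K_i$ has height strictly less than $M$; since $K_i$ is polygonal, every vertex other than $v_i$ has height at most some $M'<M$.

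First I would choose a regular height $t\in(M',M)$ just below the top. Then $K_i\cap\{h\ge t\}$ consists of exactly the two uppermost sub-segments of the edges $e_i$ and $f_i$ adjacent to $v_i$; let $a\in e_i$ and $b\in f_i$ be their lower endpoints, both at height $t$. These sub-segments form a ``tent'' $T$ with apex $v_i$, while $\alpha=K_i\cap\{h\le t\}$ is an arc from $a$ to $b$ contained in the closed lower half-space $\{h\le t\}$ and meeting the plane $\{h=t\}$ only at $a$ and $b$. Let $s$ be the straight chord $ab$ and let $\Delta$ be the flat triangle with vertices $v_i,a,b$. Since $h$ is affine on $\Delta$ with vertex heights $M,t,t$, the triangle lies in $\{h\ge t\}$, meets $\{h=t\}$ exactly in $s$, and attains its maximum $M$ only at $v_i$.

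It remains to cap the curve $J=\alpha\cup s$ by an embedded disk $D'$ inside $\{h\le t\}$. First, $J$ is unknotted: the tent $T$ and the chord $s$ cobound the triangle $\Delta$, which lies in $\{h\ge t\}$ and meets $K_i$ only along $T$ and the feet $a,b$, so $J$ is obtained from $K_i$ by a single elementary triangle move and hence is an unknot. The key step, which I expect to be the main obstacle, is then to realize a spanning disk for $J$ on the lower side of the plane: producing \emph{some} spanning disk in $\R^3$ is immediate from unknottedness, but forcing it to avoid the region above $v_i$ is exactly what makes the maximum come out right. This is the standard fact that an unknot contained in a closed half-space bounds an embedded disk in that half-space; the direction we need follows by taking any spanning disk in $\R^3$, making it meet $\{h=t\}$ transversally away from $s$, and then removing each component of its intersection with the open upper half-space by innermost-circle and outermost-arc surgeries, capping with parallel copies of subdisks pushed slightly below the plane. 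The result is an embedded disk $D'\subset\{h\le t\}$ with $\partial D'=J$ and with $D'\cap\{h=t\}=s$.

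Finally I would set $D_i=\Delta\cup D'$. Because $\Delta\subset\{h\ge t\}$ and $D'\subset\{h\le t\}$, any point of $\Delta\cap D'$ lies in the plane $\{h=t\}$, where $\Delta$ meets the plane in $s$ and $D'$ meets it in $s$; hence $\Delta\cap D'=s$, which is a boundary arc of each piece, so their union is an embedded disk with $\partial D_i=T\cup\alpha=K_i$. On $D'$ the height is at most $t<M$, while on $\Delta$ it attains $M$ only at $v_i$; therefore $v_i$ is the unique global maximum of $D_i$ in the given direction, as required.
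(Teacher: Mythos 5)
Your proposal is correct, but it takes a genuinely different route from the paper's. You build the disk explicitly: cut $K_i$ at a regular level $t$ just below the maximum, replace the resulting ``tent'' $T$ by the chord $s$ across the flat triangle $\Delta$ (a legitimate elementary move since $\Delta\cap K_i=T$), cap the unknot $J=\alpha\cup s$ by an embedded disk in the lower half-space, and glue $\Delta$ back on top. The paper instead encloses $K_i$ in a round sphere $S$ tangent at $v_i$ and meeting $K_i$ only there, takes a spanning disk intersecting $S$ minimally, and removes all circles of $D_i\cap S$ by innermost-circle cut-and-paste, so the final disk lies inside $S$ and inherits $v_i$ as its unique maximum. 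Note that your key half-space capping step is powered by exactly the same innermost-surgery engine, just with a plane in place of the sphere (plus the extra wrinkle of intersection arcs ending on $s$, which your outermost-arc/collar remark handles). The trade-offs: your argument uses polygonality essentially --- for a general unknot with a unique maximum the part of the curve above a nearby level need not be a tent, since local maxima can accumulate at $v_i$ --- so it proves the lemma only in the polygonal case, whereas the paper's proof is shorter and, as the paper emphasizes, works for arbitrary unknots. In return, your disk comes out flat in a neighborhood of $v_i$, so you obtain the paper's subsequent Claim~\ref{claim:flat} for free, and the polygonal case is all the paper actually uses.
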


This lemma will certainly hold for the special case of polygonal unknots that we study in this paper, but we prove it in general.

\begin{proof}
This can be done with a standard innermost loop argument.  Since there is a plane that intersects $K_i$ in $v_i$ and otherwise contains $K_i$ entirely on one side of it, we can also find
 a sphere tangent to the plane at $v_i$, intersecting the knot only in $v_i$ and which otherwise contains $K_i$ entirely inside of it (as the radius of the spheres tangent to the plane at $v_i$ goes to infinity, the spheres limit on the plane).  Now pick an embedded disk $D_i$ for $K_i$ whose interior intersects $S$ transversally in a minimal number of components.  Since $K_i \cap S$ is a single point there are no arcs of intersection in $S \cap D_i$.  This means all remaining intersections may be assumed to be circles.  If the set of circles is nontrivial take an innermost circle on $S$ (one of the components of $S \cap D_i$ that bounds a disk on $S$ disjoint on its interior from $S \cap D_i$) and
cut and paste $D_i$ replacing the component of $D_i -  (D_i \cap S)$ that is bounded by this circle and does not contain $K_i$ by the corresponding subset of $S$.  Pushing the new disk slightly off of $S$ gives a new disk $D_i'$ that intersects $S$ fewer times than $D_i$ did yielding a contradiction to the minimality assumption and showing that we may assume $D_i \cap S = v_i$.

\end{proof}

\begin{claim}
Given polygonal unknots $K_1$, $K_2$, and $K_3$  and disks $D_1$, $D_2$, and $D_3$ as in Lemma~\ref{lemma:extremaldisks} there exists an $\epsilon >0$ such that we may assume that each $D_i$ is planar in an $\epsilon$ neighborhood of $v_i$ (a subset of the plane containing edges $e_i$ and $f_i$), but such that $v_i$ is still the unique global maximum for $D_i$.   
\label{claim:flat}
\end{claim}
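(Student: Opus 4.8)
The plan is to work one component at a time and to surger the given disk $D_i$ only in a thin slab just below its top. Fix the height function $h(x) = \mathbf{u}\cdot x$ coming from the direction vector for which $v_i$ is the unique maximum, write $M = h(v_i)$, and let $P_i$ be the plane containing the two edges $e_i$ and $f_i$. Because $v_i$ is the unique global maximum of $D_i$ and $D_i$ is compact, for every neighbourhood $U$ of $v_i$ the set $D_i\setminus U$ is compact and has height strictly below $M$; consequently, for all sufficiently small $\delta > 0$ the cap $C := D_i \cap \{h \ge M-\delta\}$ is a small disk neighbourhood of the boundary point $v_i$, and (shrinking $\delta$ further so that the arc $\gamma = K_i \setminus (e_i\cup f_i)$ stays below $M-\delta$) the knot $K_i$ meets the plane $R := \{h = M-\delta\}$ only in the two points $a' := e_i\cap R$ and $b' := f_i \cap R$.

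First I would clean up the level plane $R$. The intersection $D_i \cap R$ is a $1$-manifold consisting of one arc $\lambda$ from $a'$ to $b'$ together with possibly some circles. Exactly as in the proof of Lemma~\ref{lemma:extremaldisks}, I would remove the circles by an innermost-disk cut-and-paste along $R$: each surgery replaces a subdisk of $D_i$ by a subdisk of $R$ and then pushes off, so it never raises a height above $M-\delta < M$ and leaves $v_i$ the unique maximum. After this cleanup we may assume $D_i \cap R = \lambda$, so $C$ is a disk whose boundary is $\overline{v_i a'} \cup \lambda \cup \overline{b' v_i}$, with the two edge segments $\overline{v_i a'}\subset e_i$ and $\overline{v_i b'}\subset f_i$ already lying in $P_i$.

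Next I would replace the cap $C$ by a flat triangle. Let $T \subset P_i$ be the triangle with vertices $v_i, a', b'$; its two upper sides are the edge segments already on $K_i$, and since $h$ is affine on $P_i$ and strictly decreases along both $e_i$ and $f_i$ away from $v_i$, the apex $v_i$ is the unique maximum of $h$ on all of $T$. The base $\overline{a'b'}$ and the arc $\lambda$ are two arcs from $a'$ to $b'$ lying in the plane $R$, so (embeddedness aside; see below) they cobound a disk $S$ at height essentially $M-\delta$, and setting $D_i^{\mathrm{new}} = (D_i\cap\{h\le M-\delta\}) \cup S \cup T$ produces an embedded disk with $\partial D_i^{\mathrm{new}} = K_i$ that coincides with the flat triangle $T$ above height $M-\delta$. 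Choosing $\epsilon < \delta/\lvert \mathbf{u}\rvert$ forces every point of $B_\epsilon(v_i)$ to have height exceeding $M-\delta$, so $D_i^{\mathrm{new}}\cap B_\epsilon(v_i)\subset T \subset P_i$ is planar, while every point off $T$ has height at most $M-\delta < M$, leaving $v_i$ the unique global maximum.

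The main obstacle is the embeddedness of this replacement, and it is concentrated entirely in the capping step. Two points need care: that $\lambda$ and the straight chord $\overline{a'b'}$ can be made to cobound an embedded disk $S$ disjoint from the rest of $D_i$, and that $T$ meets $D_i^{\mathrm{new}}$ only along $\overline{a'b'}$. If $\lambda$ happens to cross the chord inside $R$, the planar $S$ is not embedded, and one must either isotope $\lambda$, push $S$ slightly below $R$, or abandon the cut-and-paste in favour of flattening $C$ directly by a \emph{height-preserving}, level-wise isotopy that slides each slice $D_i\cap\{h=t\}$ to the corresponding segment $P_i\cap\{h=t\}$ and tapers to the identity by height $M-2\delta$. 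Either way the construction succeeds because the unique-maximum property has confined all the relevant geometry to a single well-behaved cap in the top slab; it is verifying this control carefully, rather than exhibiting the flat triangle, where the real work lies.
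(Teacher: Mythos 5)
Your reduction to the top slab is sound, and on one point you are more careful than the paper itself: you remove the circle components of $D_i \cap R$ by innermost-disk surgery, whereas the paper's proof simply asserts that a nearby parallel plane meets $D_i$ ``only in an arc.'' But there is a genuine gap exactly where you flag one, and flagging it is not resolving it. Your primary construction requires an embedded disk $S \subset R$ cobounded by $\lambda$ and the straight chord $\overline{a'b'}$; if $\lambda$ crosses the chord, then the curve $\lambda \cup \overline{a'b'}$ is not even embedded, so \emph{no} embedded disk with that boundary exists, and your second proposed repair -- pushing $S$ slightly below $R$ -- cannot help, since perturbing the interior of a disk does nothing about self-intersections of its boundary. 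Your third proposed repair, a tapered level-preserving straightening of the cap (which is in fact essentially the paper's own proof: ``we may need an isotopy of $D_i$ to straighten all such arcs near $v_i$''), also has an unaddressed hypothesis: even after the bottom level $D_i \cap R$ has been cleaned up, the intermediate slices $C \cap \{h = t\}$ for $M-\delta < t < M$ need not be single arcs -- the cap can have interior dimples whose level sets are circles -- so ``slide each slice to the corresponding segment'' is not yet a defined operation. Thus none of your three exits is carried out, and one of them is unsound as stated.

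The gap is closable with one further idea, precisely because, as you correctly observe, your slab setup makes everything below $R$ irrelevant: after the cleanup, \emph{any} embedded disk contained in $\{h \ge M-\delta\}$ with boundary $\overline{v_i a'} \cup \lambda \cup \overline{b' v_i}$ may replace the cap. One such disk that avoids the crossing problem entirely is the cone from $v_i$ over $\lambda$, i.e.\ the union of straight segments from $v_i$ to points of $\lambda$: it is embedded because any two of its points at the same height $M - t\delta$ come from the same parameter $t$, and then injectivity of $\lambda$ forces them to coincide; its two side edges are exactly $\overline{v_i a'} \subset e_i$ and $\overline{v_i b'} \subset f_i$; and $h$ strictly decreases along each segment away from $v_i$, so $v_i$ stays the unique maximum. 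After this replacement every slice of the new cap is a scaled copy of $\lambda$, so your tapered level-wise straightening becomes well defined, stays embedded because distinct levels lie in disjoint parallel planes, and near $v_i$ it carries the slices onto the chords $P_i \cap \{h=t\}$, producing the flat triangle. (Alternatively, place the base of your triangle at a level strictly above $M-\delta$, so the chord and $\lambda$ lie in disjoint parallel planes and cannot cross.) Either patch turns your outline into a proof; as written, it stops one construction short.
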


\begin{proof} The argument is simple.  We have already shown that there is a plane that intersects $D_i$ only in $v_i$.  We may find a plane parallel to it that intersects $D_i$ only in an arc.  We may need an isotopy of $D_i$ to straighten all such arcs near $v_i$, but $K_i$ stays fixed.

\end{proof}

Translate the three knots so that each $v_i$ is at the origin. Let the closure of the complement of an $\epsilon$ neighborhood of $v_i \subset D_i$ be called $J_i$.  Recall that we have just asserted above that we may assume that $D_i - J_i$ is planar since this is just a small neighborhood of $v_i$.

\begin{lemma}
Given polygonal unknots $K_1$, $K_2$, and $K_3$, disks $D_1$, $D_2$, and $D_3$, and subsets $J_1$, $J_2$, and $J_3$  as above then we may scale $K_3$ and $D_3$ up so that every point in $J_3$ is farther from the origin than the distance of any point in $D_2$ from the origin and we may then scale up $K_1$ and $D_1$ so that every point in $J_1$ is farther from the origin than any point in $D_2$ or $D_3$.
\label{lemma:extremaldisks2}
\end{lemma}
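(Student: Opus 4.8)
The plan is to reduce the statement to two elementary facts about compact sets and then let the linear behavior of scaling do all the work. The key observation is that $J_3$, being the closure of the complement of an open $\epsilon$-neighborhood of $v_3$ inside the compact disk $D_3$, is itself compact and is bounded away from the origin: with $v_3$ translated to the origin, every point of $J_3$ satisfies $|x| \ge \epsilon$. Hence $m_3 := \min_{x \in J_3} |x|$ is well defined and strictly positive. Since $D_2$ is compact, $M_2 := \max_{y \in D_2} |y|$ is finite. The whole lemma will come down to comparing these extremal distances after rescaling.

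First I would record the effect of scaling. Because all three vertices $v_i$ have been placed at the origin, scaling $D_3$ by a factor $\lambda > 0$ is exactly the homothety $x \mapsto \lambda x$ fixing the origin; it carries $J_3$ to $\lambda J_3$ and multiplies every distance from the origin by $\lambda$, so that $\min_{x \in \lambda J_3} |x| = \lambda m_3$. I would also note in passing that such a homothety scales the height function (in any fixed direction) by the positive factor $\lambda$ and so preserves the property that $v_3$ is the unique global maximum of $D_3$, and leaves the planarity near $v_3$ furnished by Claim~\ref{claim:flat} intact; this guarantees the rescaled $D_3$ is still a disk of the type we need.

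Now choosing any $\lambda > \max\{1, M_2/m_3\}$ gives $\lambda m_3 > M_2$, which is precisely the assertion that, after this scaling, every point of $J_3$ is farther from the origin than every point of $D_2$. With $D_3$ fixed at this new scale, I would repeat the argument for the first component: set $m_1 := \min_{x \in J_1} |x| > 0$ and let $M_{23} := \max\{\, |y| \mid y \in D_2 \cup D_3 \,\}$ be the largest distance from the origin attained on the union of $D_2$ and the \emph{already rescaled} $D_3$, which is finite by compactness. Scaling $D_1$ by any $\mu > \max\{1, M_{23}/m_1\}$ then yields $\mu m_1 > M_{23}$, so every point of $J_1$ is farther from the origin than any point of $D_2$ or of $D_3$, completing the proof.

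There is no deep obstacle here; the entire content is the pair of facts that each $J_i$ is compact and avoids a neighborhood of the origin (so its minimal radius is positive) while each $D_j$ is compact (so its maximal radius is finite), after which a sufficiently large dilation separates the radii. The single point that genuinely requires care is the order of operations: the bound $M_{23}$ controlling the second scaling must be computed \emph{after} $D_3$ has been enlarged, since enlarging $D_3$ increases the distances that $J_1$ must then exceed. Scaling $D_3$ first and $D_1$ second, as the lemma prescribes, is exactly what makes both separation inequalities hold simultaneously at the end.
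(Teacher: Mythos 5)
Your proposal is correct and follows essentially the same route as the paper: the paper scales $K_3$ by $\lambda I$ with $\lambda > r_2/\epsilon$, using the fact that $J_3$ lies outside the $\epsilon$-ball (so $\epsilon$ plays the role of your $m_3$) while $r_2$ is your $M_2$, and then scales $K_1$ against the already-enlarged $D_3$ exactly as you prescribe. Your explicit remarks that scaling preserves the unique-maximum and flatness properties, and that the second bound must be computed after the first scaling, are points the paper leaves implicit but are the same argument.
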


\begin{proof}

For each $i$, let $v_i$ be at the origin and let $r_i$ be the maximum distance from the origin to any point of $D_i$.  
We know by the argument above that we may pick an $\epsilon$ such that
an $\epsilon$ neighborhood of the origin in $\R^3$ intersects each $D_i$ only in a flat subset.  Scale $K_3$ up by multiplying by the three by three matrix $\lambda I$, where $\lambda > \frac{r_2}{\epsilon}$.  As mentioned earlier, we will abuse notation and call the scaled up knot and disk $K_3$ and $D_3$.  The scaling will ensure that any point on $D_3$ that is not in the planar portion is farther from the origin than any point in $D_2$.  
Now scale $D_1$ and $K_1$ up similarly so that any point on $D_1$ that is not in its planar portion is farther from the origin than any point in $D_2$ or $D_3$.

\end{proof}

\begin{cor}
We may scale the knots and disks so that if each $v_i$ is sufficiently close to the origin then $D_2$ can only intersect the planar portion of $D_3 \cup D_1$ and such that $D_3 \cap D_1$ is contained in the planar portion of $D_1$.
\label{cor:planar}
\end{cor}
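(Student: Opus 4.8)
The plan is to read off the Corollary almost directly from Lemma~\ref{lemma:extremaldisks2} together with the definition of the sets $J_i$, since all of the genuine work has already been done in the scaling step. First I would record the one elementary geometric fact that makes everything go through: if $r_2$ denotes the maximum distance from the origin to a point of $D_2$, then $D_2$ is contained in the closed ball of radius $r_2$ about the origin, and each $D_i$ decomposes as the union of its planar portion $D_i \setminus J_i$ (the flat $\epsilon$-neighborhood of $v_i$ guaranteed by Claim~\ref{claim:flat}) with its far portion $J_i$.

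Next I would translate each separation inequality of Lemma~\ref{lemma:extremaldisks2} into a disjointness statement. Since every point of $J_3$ is strictly farther from the origin than every point of $D_2$, the ball of radius $r_2$ contains $D_2$ but misses $J_3$, so $D_2 \cap J_3 = \emptyset$; hence any point of $D_2 \cap D_3$ must lie in $D_3 \setminus J_3$, the planar portion of $D_3$. The same argument with $J_1$ in place of $J_3$ gives $D_2 \cap J_1 = \emptyset$ and therefore $D_2 \cap D_1 \subseteq D_1 \setminus J_1$. Together these show that $D_2$ can only meet the planar portion of $D_3 \cup D_1$. For the final clause I would invoke the part of Lemma~\ref{lemma:extremaldisks2} asserting that every point of $J_1$ is farther from the origin than every point of $D_3$; this yields $D_3 \cap J_1 = \emptyset$, so $D_3 \cap D_1 \subseteq D_1 \setminus J_1$, i.e.\ $D_3 \cap D_1$ lies in the planar portion of $D_1$.

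Finally I would address the clause ``if each $v_i$ is sufficiently close to the origin.'' The inequalities supplied by Lemma~\ref{lemma:extremaldisks2} are strict, and since each $D_i$ is compact the relevant minima and maxima of the distance-to-origin function are attained; hence there is a positive gap between $\min_{x \in J_3}\lVert x\rVert$ and $\max_{x \in D_2}\lVert x\rVert$, and likewise for the other two inequalities. A rigid motion of a component shifts every distance-to-origin value by at most the displacement of the vertex, so provided each $v_i$ is moved off the origin by less than half the smallest of these gaps, the three disjointness statements $D_2\cap J_3=\emptyset$, $D_2\cap J_1=\emptyset$, and $D_3\cap J_1=\emptyset$ all persist, and with them the stated conclusion.

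I expect the only real subtlety to be this last, quantitative robustness point: the geometric heart of the statement is already contained in the strict scaling inequalities, so the main obstacle is simply making ``sufficiently close'' precise by extracting an explicit margin from the compactness of the disks and bounding how far the distance function can shift under a small motion of each component. Everything else is a direct set-theoretic consequence of Lemma~\ref{lemma:extremaldisks2} and the decomposition $D_i = (D_i\setminus J_i)\cup J_i$.
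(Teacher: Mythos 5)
Your proposal is correct and matches the paper's intent exactly: the paper states Corollary~\ref{cor:planar} without proof, treating it as an immediate consequence of Lemma~\ref{lemma:extremaldisks2}, and your argument simply makes explicit the disjointness statements ($D_2\cap J_3=\emptyset$, $D_2\cap J_1=\emptyset$, $D_3\cap J_1=\emptyset$) that the lemma's distance inequalities yield, plus the compactness/gap argument justifying the ``sufficiently close to the origin'' robustness. Nothing in your write-up deviates from the paper's route; it just supplies the details the paper leaves implicit.
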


We now state the two main theorems of the paper.

\begin{theorem}
Let $K_1$, $K_2$, and $K_3$ be three polygonal unknots, then we may form the Borromean rings out of them through rigid motions of $\R^3$ applied to the individual components together with scaling of the components.
\label{thm:poly}
\end{theorem}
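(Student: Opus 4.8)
The plan is to reduce the construction to a purely local problem near the origin and then to realize the standard Borromean tangle there. By Lemma~\ref{lemma:extremaldisks} and Claim~\ref{claim:flat} I first replace each spanning disk $D_i$ by one that is genuinely flat in an $\epsilon$-neighborhood of its unique global maximum $v_i$, and I translate each $v_i$ to a point near the origin. Applying Lemma~\ref{lemma:extremaldisks2} and Corollary~\ref{cor:planar}, I scale $K_1$ and $K_3$ so that the non-flat pieces $J_1, J_3$ are pushed far out, with the effect that \emph{every} intersection among the three disks is forced into the flat wedges: $D_3 \cap D_1$ lies in the flat part of $D_1$, and $D_2$ meets only the flat parts of $D_1 \cup D_3$. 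The crucial consequence is that for each pair $i \neq j$ we have $K_j \cap D_i \subset D_j \cap D_i$, so every piercing that computes a linking number, and indeed the entire part of the configuration that controls its isotopy type, occurs inside these flat regions. This turns the problem into one of arranging three flat wedges in $\R^3$.

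Next I position the three wedges. Each $K_i$ may be moved by an independent rigid motion, so I am free to choose the maximizing direction $\hat u_i$ and the rotation of the flat wedge about it; because $v_i$ is the \emph{unique} maximum, the whole disk $D_i$ (flat part together with $J_i$) hangs monotonically below $v_i$ in the direction $\hat u_i$. I place the three vertices at three nearby but distinct points close to the origin (permitted by Corollary~\ref{cor:planar}) in general position, and I orient the three flat wedges so that their projection to a fixed plane reproduces the familiar three-fold symmetric diagram of the Borromean rings, in which each pair of arcs crosses exactly twice with opposite signs and the over/under assignments cycle so that component $1$ passes over $2$, component $2$ over $3$, and component $3$ over $1$. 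The heights needed to realize these crossings are supplied by tilting each flat wedge, and the unique-maximum property guarantees that the remote pieces $J_1, J_2, J_3$ descend away from this tangle and merely close up each arc into an unknotted loop without introducing any further piercing of another disk.

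It then remains to verify that the link so produced is exactly the Borromean rings. Because all piercings are confined to the flat wedges and cancel in sign, each pairwise linking number is $0$; moreover, deleting any one component leaves the other two able to be pulled apart, since the two remaining flat wedges are then unlinked and their far pieces are disjoint, so every two-component sublink is a genuine unlink. To identify the whole link rather than merely show that it is Brunnian, I read off the alternating six-crossing diagram created by the three wedges and check that it is the standard Borromean diagram (equivalently, that the triple Milnor invariant $\bar\mu(123)$ is $\pm 1$), which pins down the link type.

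The main obstacle I anticipate is the robustness of the middle step: the corner angle at $v_i$ and the shape of the remainder of each polygon are arbitrary, so I must show that \emph{every} such flat wedge can be rotated and tilted into the required crossing configuration, and that no matter how the far pieces $J_i$ are shaped they can be kept monotonically below the tangle without ever re-entering another disk. Controlling the far pieces is exactly what the unique-global-maximum hypothesis and the scaling of Lemma~\ref{lemma:extremaldisks2} are for, and the bulk of the work will be making the informal ``project to the Borromean diagram'' argument rigorous, that is, producing an explicit isotopy from the constructed configuration to a standard model of the Borromean rings.
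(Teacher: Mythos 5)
Your setup (Lemma~\ref{lemma:extremaldisks}, Claim~\ref{claim:flat}, the scaling of Lemma~\ref{lemma:extremaldisks2} and Corollary~\ref{cor:planar} to force all disk intersections into the flat wedges) matches the paper exactly, but the core of your argument has two genuine gaps. First, the step ``orient the wedges so that their projection reproduces the standard six-crossing Borromean diagram and read off the link type'' conflates two different kinds of control. What the scaling actually gives you is control over \emph{disk piercings}: every point of $D_i \cap D_j$ lies in the flat wedges near the origin. It gives you no control whatsoever over \emph{crossings in a planar projection}: the remote pieces $J_i$ and the bulk of each polygon $K_i$ still project on top of the local tangle in any direction you choose, so the diagram of the constructed link is emphatically not a six-crossing diagram, and the extra crossings are exactly what could change the link type. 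You acknowledge this as ``the main obstacle'' and defer it, but it is the entire content of the theorem. The paper never argues via projections at all: it performs an explicit sequence of small moves (two translations of $K_2$, a translation and a rotation of $K_3$ about a line $l$, each kept small by Claim~\ref{claim:simpint} so that no new disk intersections appear) until each disk $D_r$ meets the union $D_s \cup D_t$ of the other two in a cross, and then invokes the result of \cite{H2} that this intersection pattern can only occur for the Borromean rings. That characterization is what replaces your missing ``produce an explicit isotopy to the standard model'' step.

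Second, your fallback identification --- pairwise linking numbers zero, Brunnian, and $\bar\mu(123)=\pm 1$ --- does not ``pin down the link type.'' Milnor's triple invariant is a link-homotopy invariant; together with vanishing linking numbers it determines the configuration only up to link homotopy, and there are infinitely many pairwise distinct three-component links (including Brunnian ones, as the paper itself points out in the introduction) that share these invariants with the Borromean rings without being isotopic to them. So even if the crossing arrangement were established, this check would not finish the proof. To close the argument you would need either to exhibit an honest diagram isotopic to the standard Borromean diagram --- which returns you to the unresolved projection problem above --- or to use a genuine characterization of the Borromean rings, such as the disk-intersection cross pattern from \cite{H2} that the paper relies on.
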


\begin{theorem}
Let $K_1$, $K_2$, and $K_3$ be three polygonal unknots, at least two of which are planar, then we may form the Borromean rings out of them through rigid motions of $\R^3$ applied to the individual components.
\label{thm:polyplanar}
\end{theorem}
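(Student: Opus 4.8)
The plan is to run the same construction used for Theorem~\ref{thm:poly}, but to replace the scaling step of Lemma~\ref{lemma:extremaldisks2} with an argument that exploits the flatness of two of the disks. Relabel so that $K_1$ and $K_2$ are the two planar unknots and $K_3$ is the remaining polygonal unknot. Applying Lemma~\ref{lemma:extremaldisks} and Claim~\ref{claim:flat} gives disks $D_1,D_2,D_3$ with unique global maxima $v_1,v_2,v_3$, each flat in an $\epsilon$-neighborhood of its vertex. The essential observation is that since $K_1$ and $K_2$ are planar we may take $D_1$ and $D_2$ to be the planar regions they bound, so $J_1$ and $J_2$ are themselves flat and there is only a single genuinely non-flat piece, the bulk $J_3$, to control.

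First I would translate the three knots so that $v_1,v_2,v_3$ all lie in one small ball $B$ of radius $\epsilon$ about the origin, and orient the three flat wedges $D_i\cap B$ so that inside $B$ the edges $e_i,f_i$ cross one another in exactly the central pattern of the standard Borromean diagram: each strand passing over one neighbor and under the other, realized by staggering the three flat wedges in height. This is precisely the local model already used in Theorem~\ref{thm:poly}, and inside $B$ it produces three pairwise unlinked strands whose triple is Borromean. The remaining task is to check that nothing outside $B$ disturbs this pattern.

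Outside $B$ the two planar disks lie each in a single plane, so $D_1\cap D_2$ is a single arc, easily arranged to add no crossing beyond those placed inside $B$, while $K_1$ and $K_2$ close up within their own planes. The only genuinely three-dimensional interaction left is between the bulk $J_3$ and the two flat disks, and here planarity substitutes for scaling. Since $v_3$ is the unique global maximum of $D_3$ with respect to its direction vector, $J_3$ lies strictly to one side of the plane through $v_3$ normal to that vector; because $D_1$ and $D_2$ are flat, their intersections with $J_3$ are finite and transverse, so by translating $K_3$ off to that side and orienting the two planes appropriately I would arrange that the arc of $K_3$ bounding $J_3$ closes up clear of $D_1$ and $D_2$, leaving every pairwise linking zero and the triple linking equal to that of the Borromean rings. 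No change of scale is needed precisely because there is a single bulk to manage and two flat disks with which to manage it, in contrast to Theorem~\ref{thm:poly} where two non-flat bulks forced the use of Lemma~\ref{lemma:extremaldisks2}.

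The step I expect to be the main obstacle is exactly this last one: guaranteeing, using rigid motions alone, that the one non-planar bulk $J_3$ meets the two planar disks only in the controlled way the Borromean pattern allows. In the general theorem every non-flat piece was simply pushed out to a large radius; here I must instead verify that a flat disk can be positioned to dodge a fixed bulky obstruction without moving its near-origin wedge out of the Borromean configuration, and confirm that the resulting closure neither unlinks nor over-links any pair and preserves the triple linking of the Borromean rings.
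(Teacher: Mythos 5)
Your overall strategy --- run the construction of Theorem~\ref{thm:poly} and let the flatness of two of the disks stand in for the scaling of Lemma~\ref{lemma:extremaldisks2} --- is exactly the paper's, but the step you yourself flag as the main obstacle is precisely the step the proposal never supplies, and the fix you sketch would not work. You propose to keep the bulk $J_3$ away from the two flat disks ``by translating $K_3$ off to that side,'' but any translation that moves $J_3$ clear of two possibly enormous flat disks also moves the wedge at $v_3$ away from the origin and destroys the local Borromean configuration; these two requirements pull in opposite directions, and nothing in the proposal reconciles them. The paper's resolution is different: it never moves the bulk away at all. Instead it sets up a half-space separation from the start --- by Lemma~\ref{lemma:extremaldisks} choose $D_3$ so that $v_3$ is its unique global \emph{minimum} in $z$, so $D_3\subset\{z\ge 0\}$, while $D_2$ (hanging below its global maximum $v_2$) and the flat disk $D_1$ lie in $\{z\le 0\}$ --- and then aligns the wedge planes $P_2$ and $P_3$ by rotating only a \emph{flat} disk about the $y$-axis, so that no non-flat bulk ever sweeps into the opposite half-space. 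After that, a compactness argument (Claim~\ref{claim:simpint}) guarantees that every subsequent motion is small enough that all disk intersections stay in the flat wedges near the origin for the rest of the proof. This is also why the paper assigns the two planar knots to the roles of $K_1$ and $K_3$ rather than your $K_1$ and $K_2$: in the general theorem the scaling is applied to components $1$ and $3$ (component $2$ is never scaled), so those are the two slots where flatness is needed. With your assignment, the plane-aligning rotation would drag the non-flat bulk $J_3$ across the $xy$-plane, where the full flat disk $D_1$ lies; one could salvage your labeling by rotating the flat $D_2$ onto $P_3$ instead, but that is an argument you would have to make.

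A second, independent gap is your closing criterion. Having all pairwise linking numbers zero and ``the triple linking equal to that of the Borromean rings'' does not certify that the link \emph{is} the Borromean rings: those invariants do not determine the link type among three-component links. The paper instead finishes by exhibiting the intersection pattern of the three spanning disks --- each $D_r$ meets $D_s\cup D_t$ in a cross, as in Figure~\ref{fig:brstick5} --- and cites \cite{H2} for the fact that this pattern can occur only for the Borromean rings. Without that argument or a genuine substitute, the proposal does not conclude.
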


The scaling of Lemma~\ref{lemma:extremaldisks2} and Corollary~\ref{cor:planar} is the only scaling necessary in the proof of Theorem~\ref{thm:poly} and no scaling is necessary in the proof of Theorem~\ref{thm:polyplanar}.  Not using Lemma~\ref{lemma:extremaldisks2} or Corollary~\ref{cor:planar}  in Theorem~\ref{thm:polyplanar} is the only difference between the two proofs.

%
%
%
%
\bigskip

{\em Proof of Theorems~\ref{thm:poly} and \ref{thm:polyplanar}:} The two proofs are nearly identical, so it will be easy to prove both at the same time.  We start by arguing that we may use rigid transformations of $\R^3$ to position $K_1$, $K_2$, and $K_3$ as they appear in  Figure~\ref{fig:origin} and then use a
translation to arrive at  Figure~\ref{fig:polygonal}.  In the case where two of the components are planar, let them without loss of generality, be $K_1$ and $K_3$.

\begin{figure}
	\centering
	\includegraphics[width=0.45\textwidth]{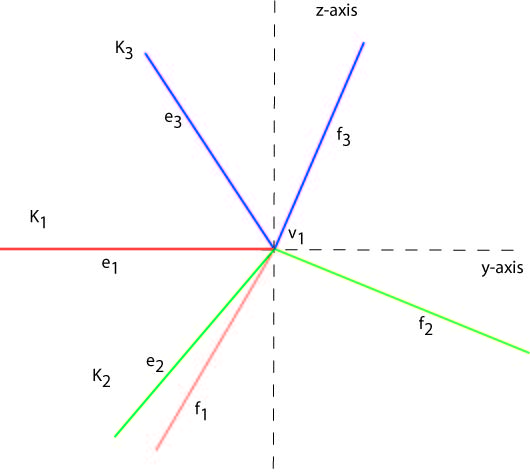}
	\includegraphics[width=0.45\textwidth]{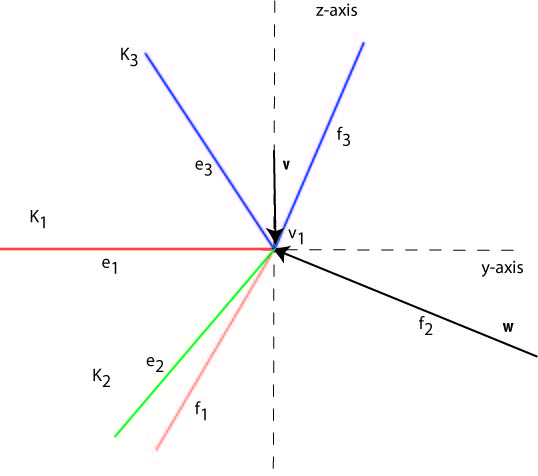}
	\caption{The Knots are initially moved so they have an extremal vertex at the origin.  $\{e_1, f_1\} \subset K_1$ lie in the $xy$-plane,  $\{e_2, f_2\} \subset K_2$ and  $\{e_3, f_3\} \subset K_3$ lie in a plane $P$ containing the $y$ axis.
	 On the right we see a vector ${\vec w}$ with its head at $v_2$ and its tail at the other vertex of $f_2$ and we see the vector
${\vec v}$  with its head at $v_3$ at the origin and its tail between $e_3$ and $f_3$.}
	\label{fig:origin}

	\end{figure}

%


\begin{figure}
	\centering
	\includegraphics[width=0.65\textwidth]{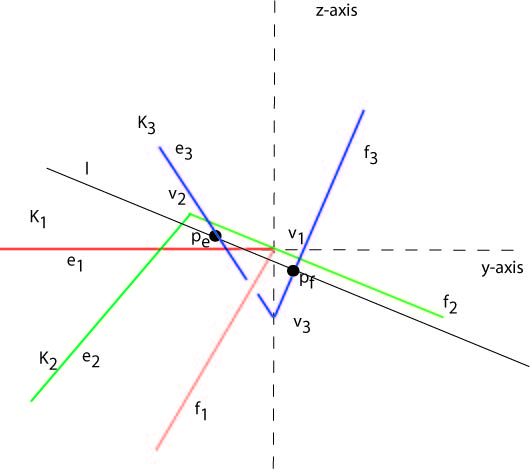}
	\caption{We shift $K_2$ slightly in the direction of the vector ${\vec w}$ and $K_3$ slightly in the direction of ${\vec v}$.}
	\label{fig:polygonal}

	\end{figure}
%

We initially position $v_1$, $v_2$, and $v_3$  at the origin. To be specific, for $K_1$  place $e_1$
 in the $xy$-plane so that $v_1$ is at the origin and $e_1$ lies on the (negative)  $y$-axis.  Fixing this edge rotate $K_1$ until  $f_1$ lies in the $xy$-plane and has positive values for $x$-coordinates aside from at the point $v_1$ which has $x$ coordinate 0.

We will arrange $K_2$ and $K_3$ so that $e_2$, $f_2$, $e_3$, and $f_3$ all contain the origin, and are all coplanar in a plane $P$ that contains the $y$ axis, such as the $xz$-plane.  This requires a little work since we also care about the intersections of the $D_i$'s.

Place $K_2$ so that $v_2$ is a global maximum for $K_2$ with respect to $z$,  and lies at the origin.  Rotate it around the $z$ axis so that $P_2$, the plane containing $e_2$ and $f_2$ contains the $y$ axis.  $v_2$, of course, remains a global maximum.

Place $K_3$ so that $v_3$ is a global minimum for $K_3$ with respect to $z$,  and lies at the origin.  Rotate it around the $z$ axis so that $P_3$, the plane containing $e_3$ and $f_3$ contains the $y$ axis.

If we may choose $P_2=P_3$ while preserving the above properties, we do so.  If not, and $P_3$ is steeper than $P_2$
rotate them around the $z$ axis so that the upper half plane of each $P_i$ (the portion above the $xy$-plane) has non-positive $x$-coordinates. 
If $P_2$ is steeper than $P_3$ rotate them so that the upper half plane of each $P_i$ has non-negative $x$-coordinates.
 $v_2$ and $v_3$ remain extremum for $K_2$ and $K_3$ respectively 
and each $P_i$ still contains the $y$-axis.

If two of the knots are planar, recall that we chose them to be $K_1$ and $K_3$.  Since $D_3$ is a flat disk totally contained in $P_3$, and neither $D_1$ nor $D_2$ have points with positive $z$ coordinates, we may rotate $K_3$, $D_3$, and $P_3$ around the $y$ axis until $P_3 = P_2$ without introducing any new intersections of the disks.  If the knots are not planar then we now apply Lemma~\ref{lemma:extremaldisks2} to scale the knots.

Now a rotation of $K_3$, $D_3$, and $P_3$ around the $y$-axis through the acute angle between $P_2$ and $P_3$ keeps $v_3$ at the origin and preserves the properties of  Corollary~\ref{cor:planar}.  We have set up the planes so that no matter whether the upper half plane of $P_3$ was above or below the upper half-plane of $P_2$, the rotation goes in the same direction.  It is a clockwise rotation around the $y$-axis if looking towards the origin from a point on the positive $y$-axis.  Equivalently a point of the form $(x,y,z)$ with $z>0$ will see its $x$ coordinate decrease as a result of the rotation.

As we rotate $P_3$ around the $y$-axis onto $P_2$, the total rotation is less than 90 degrees and any points other than $v_3$ where $D_3$ now intersects the $xy$-plane have negative $x$ values since they all had positive $z$ values before the rotation.  Thus they are disjoint from the planar portion of $D_1$, which had only non-negative $x$ values.  We have, of course, asserted that this is the only portion of $D_1$ which $D_3$ can intersect.  Similarly by Corollary~\ref{cor:planar} the only points of $D_3$ that could intersect $D_2$ are in the planar portion of $D_3$ and aside from $v_3$ this remains above the $xy$-plane.  Since $D_2$ is strictly below the $xy$-plane aside from $v_2$, we know that even after the rotation the only intersection of $D_2$ and $D_3$ must be at the origin.

Thus we may now assume that $P_2=P_3$ and we rename the new plane $P$.  
  Thus $D_i \cap D_j$ is the origin for $i \neq j$ and $e_2$, $f_2$, $e_3$ and $f_3$ all lie in the same plane $P$.  Note that $v_3$ may no longer be a global minimum with respect to $z$, but this is not a problem since we now completely understand the intersection patterns of the disks.

We do not want any of $e_2$, $f_2$, $e_3$ and $f_3$ to be collinear, but by general position, this may be ensured by rotating one of the knots by $\epsilon$ around the line perpendicular to $P$ and through the origin without creating any new intersections.

 Note that $J_s \cap D_t = \empty set$ for $s \neq t$.  Since each $D_s$ and $J_t$ is  a pair of disjoint compact sets we may find a minimum distance $d$ from $J_s$ to $D_t$ over all $s \neq t$.  
For the rest of the paper we will make sure that no $D_s$ is moved more than $d/2$ and thus this disjoint property will be preserved.  {\em Thus from here on out $D_s \cap D_t$ for $s \neq t$ will only occur in the flat triangular portions of the disks that lie in an $\epsilon$ neighborhood of the origin.}

\medskip

We have completed the only scaling we need in the proof of Theorem~\ref{thm:poly} and no scaling is needed in the proof of
Theorem~\ref{thm:polyplanar}.   Otherwise the proofs of the two theorems are identical.  Although $P$ may not be the $yz$-plane, we will picture it in this manner in the figures since that will not impact the future arguments (we only use the fact that $P$ exists and contains the $y$-axis, not the specific angle it makes with the $xy$-plane).

Let $f_2$ be the least steep edge from the collection $\{e_2, f_2, e_3, f_3 \}$ (the absolute value of the slope of $f_2$
in $P$  is less than the absolute value of the slopes of $e_2$, $e_3$ and $f_3$ in the same plane).  If we chose the knots wisely from the start this is a safe assumption, but if not this may require returning to the start of the argument and relabeling of $K_2$ and $K_3$ and going through the above steps, all of which will still work fine and lead to this desired steepness result.  

We now fix $K_1$ for the rest of the proof and move the other two knots slightly starting with $K_2$.

\begin{claim}
Given  disks $\{D_1, D_2, D_3\}$ intersecting only at the origin and bounded by knots $\{K_1, K_2, K_3\}$ as above, then for any $\epsilon >0$, let $\{J_1, J_2, J_3\}$ be equal to $\{D_1, D_2, D_3\}$ minus the portion of the $D_i$'s in an open $\epsilon$ ball around the origin, and given any translation of $\R^3$ acting on a given $D_i$ or any rotation of that $D_i$ about a fixed axis $l$, then there exists an $\epsilon'$ such that any translation of distance less than $\epsilon'$ leaves the $J_i$'s pairwise disjoint.
Similarly there exists an $\alpha > 0$ such that any rotation about $l$ of angle less than $\alpha$ leaves the $J_i$'s pairwise disjoint.
\label{claim:simpint}

\end{claim}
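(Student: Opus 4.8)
The plan is to treat this as a routine compactness argument: the $J_i$ are pairwise disjoint compact sets, disjoint compact sets lie a positive distance apart, and a sufficiently small rigid motion cannot displace any point by more than that distance.

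First I would record that each $J_i$ is compact and that the three of them are pairwise disjoint. Compactness is immediate, since $J_i$ is the intersection of the compact disk $D_i$ with the closed complement of the open $\epsilon$-ball about the origin, hence closed and bounded. Pairwise disjointness follows from the standing hypothesis that $D_i \cap D_j$ is exactly the origin for $i \neq j$, together with the fact that the origin is deleted from every $J_i$ (it lies inside the open $\epsilon$-ball). Consequently, exactly as in the minimum-distance argument already used just above the claim, the quantity $d = \min_{i \neq j} \operatorname{dist}(J_i, J_j)$ is a well-defined positive number.

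For the translation statement I would simply note that a translation by a vector $\vec{w}$ displaces every point of the single moved disk $J_i$ by exactly $|\vec{w}|$, while leaving the other two $J_j$ fixed. Hence if $|\vec{w}| < \epsilon'$, then for $p \in J_i$ and $q \in J_j$ with $j \neq i$ the triangle inequality gives $|(p + \vec{w}) - q| \geq |p - q| - |\vec{w}| > d - \epsilon'$, which is positive as soon as $\epsilon' \leq d$. Taking $\epsilon' = d/2$ therefore keeps the translated copy of $J_i$ disjoint from $J_j$ and $J_k$, while the two unmoved disks remain disjoint from each other.

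The rotation statement is the only place requiring a short estimate, and it is where I expect the (very mild) obstacle to lie, namely converting an angular bound into a linear-displacement bound. By compactness let $\rho$ be the maximum distance from a point of $J_i$ to the axis $l$. A rotation through angle $\theta$ carries a point at distance $r \leq \rho$ from $l$ along a circular arc whose chord has length $2r\sin(|\theta|/2) \leq \rho\,|\theta|$, so every point of $J_i$ is displaced by less than $\rho\,\alpha$ under a rotation of angle less than $\alpha$. Choosing $\alpha = d/(2\rho)$ (and noting that any $\alpha$ works when $\rho = 0$, i.e. when $J_i$ lies on $l$) bounds each displacement by $d/2$, and the same triangle-inequality estimate as above then yields $\operatorname{dist}(R_\theta(J_i), J_j) \geq d - d/2 > 0$. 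This establishes pairwise disjointness after the rotation and completes both halves of the claim.
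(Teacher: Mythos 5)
Your proposal is correct and follows essentially the same route as the paper: both rest on the observation that the $J_i$ are pairwise disjoint compact sets, hence a positive minimum distance $d$ apart, and that a sufficiently small translation or rotation displaces no point by more than a fraction of $d$. The only difference is that you make explicit the chord-length estimate $2r\sin(|\theta|/2)\leq \rho|\theta|$ converting an angular bound into a displacement bound, which the paper leaves implicit.
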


{\em Proof of Claim~\ref{claim:simpint}:} 
Since $J_i \cap J_j = \empty set$ and they are both compact, there is a positive minimal distance $s$ between any point in $J_i$ and $J_j$.  Setting $\epsilon' < s$  will ensure that translating one of the disks less than $\epsilon'$ cannot create an intersection.  Similarly after fixing an axis of rotation we can pick a small enough angle $\alpha$ such that rotating $J_i$ will move no point of $J_i$ more than $s/2$ completing the proof of the claim.

\medskip

We may pick any $\epsilon > 0$ and be certain that no point on any disk will be moved more than $\epsilon$ for the rest of the proof.  
The virtue of Claim~\ref{claim:simpint} is that we now know that during all our remaining manipulations of the disks and knots the existing components of intersection may change in size and shape (or even go away), but no new intersections will be introduced.  Also all intersections will remain in an $\epsilon$ ball neighborhood of the origin and on the flat triangular pieces of the $D_i$'s running between $e_i$ and $f_i$ (i.e. the complement to $J_i$ in $D_i$) .

Let $ {\vec w}$ be the vector with its head at $v_2$ and parallel to $f_2$ (its tail may be thought of as lying on the other vertex of $f_2$) as in Figure~\ref{fig:origin}.  Translate $K_2$ by adding $\epsilon * {\vec w}$ to every point on $K_2$ for a sufficiently small $\epsilon$ in order to translate $K_2$  (and $D_2$) minimally up in a direction parallel to $f_2$. We want to be certain that $e_2 \cap e_1$ remains nontrivial and that no new intersections are introduced outside of a neighborhood of the origin.
By Claim~\ref{claim:simpint} choosing a sufficiently small $\epsilon$ will ensure all of these properties, as would any positive translation
smaller than $\epsilon$.
Now
$v_2$ is very close to, but above $v_1$, $f_2$ intersects the origin ($v_1$), and $e_2$ intersects $e_1$ in some point other than $v_1$.  $K_1$ and $K_2$ look as they do in Figure~\ref{fig:polygonal} and we need to reposition $K_3$ to match the figure.

The fact that $f_2$ is not as steep as $e_3$ and $f_3$ ensures that  both $e_3$ and $f_3$ are on the same side of the line containing $f_2$ in $P$.  $f_2$ being less steep than $e_2$ and on its right in $P$ ensures that the points of $f_2$ all have non-negative $y$-value.
Let ${\vec v}$ be a vector with its head on $v_3$ at the origin and its tail between $e_3$ and $f_3$ as in Figure~\ref{fig:origin}.
Translate $K_3$
by $\delta * {\vec v}$ for a small $\delta$.  This
keeps $e_3$ and $f_3$ in $P$.
Choosing a sufficiently small $\delta$  again makes sure that all changes in the intersections of the disks occur in a neighborhood of the origin, that
$K_2 \cap K_3$ consists of exactly two points, $e_3 \cap f_2$ and $f_3 \cap f_2$.
Finally to complete the figure pick $l$, a line in $P$ parallel to $f_2$, but separating $f_2$ from $v_3$.  Let $l \cap e_3$ be called $p_e$ and let $l \cap f_3$ be called $p_f$.  We must pick $l$ close enough to $f_2$ so that $p_e$ is above $e_1$.  This is easy to do since  $e_1$ lies on the $y$-axis and we need only make sure that $p_e$ has positive
$z$-coordinate.  The point $e_3 \cap f_2$ has positive $z$ coordinate so if $l$ is sufficiently close to $f_2$ then $p_e$ will, too.  On the other hand, $f_2 \cap f_3$  has negative $z$-coordinate, and $p_f$ is below this point, $p_f$ will have negative $z$-coordinate.

Recall that we have already established  that in Figure~\ref{fig:polygonal} we may assume the $D_i$ are all disjoint from the neighborhood of the origin depicted except in the obvious flat triangular  sub-disks and that the $D_i$ are disjoint from each other outside of the figure.
%
%

\begin{figure}
	\centering
	\includegraphics[width=0.45\textwidth]{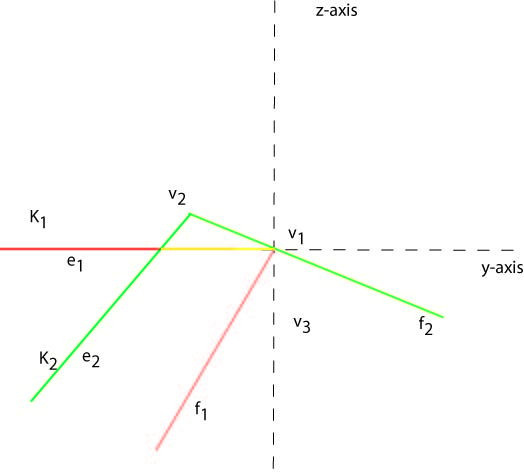}
	\includegraphics[width=0.45\textwidth]{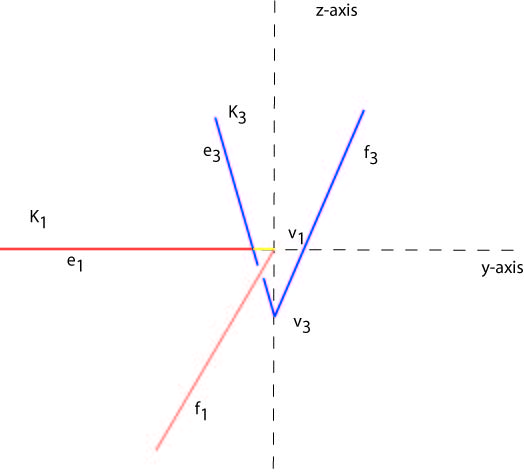}
	\includegraphics[width=0.45\textwidth]{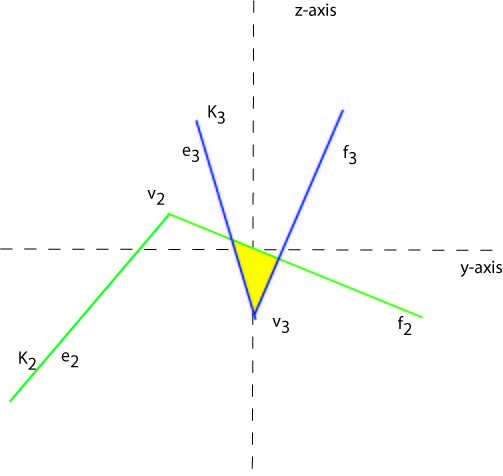}
	\caption{The initial intersections of the disks $D_1 \cap D_2$, $D_1 \cap D_3$, and $D_2 \cap D_3$
	respectively are shown in gold. They are not yet in general position.}
	\label{fig:brstick4}
	\label{fig:originint}

	\end{figure}

%
%
%

Now we want to put the knots and disks in general position. This process will take us from Figure~\ref{fig:polygonal} to Figure~\ref{fig:brstick}.  Because general position is always easy to attain with infinitesimally small transformations we can, as mentioned earlier, pick a small number $\epsilon$ and no point of the knots or disks will move more than $\epsilon$ over the rest of the proof. This ensures that the only new intersection patterns between the disks will be the result of local changes in the current intersection patterns.

\begin{figure}
	\centering
	\includegraphics[width=0.65\textwidth]{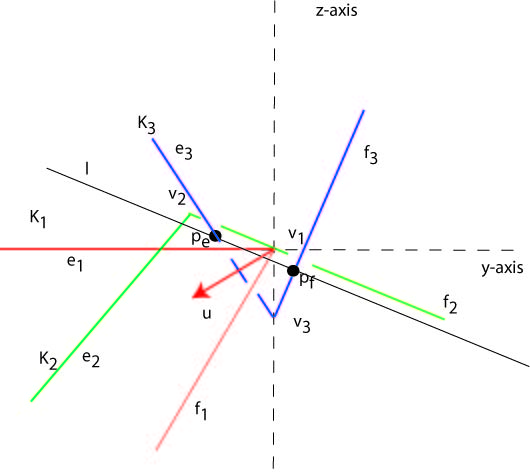}
	\caption{$K_3$ has been rotated and the Borromean rings will be formed once we translate $K_2$ slightly in the direction of horizontal vector  ${\vec u}$ which lies in the $xy$-plane with its tail at the origin and its head between $e_1$ and $f_1$.}
	\label{fig:rotated}

	\end{figure}
	
\begin{figure}
	\centering
	\includegraphics[width=0.65\textwidth]{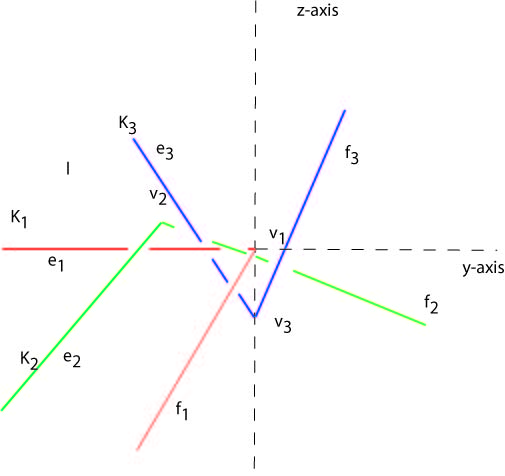}
	\caption{The Borromean Rings.}
	\label{fig:brstick}

	\end{figure}

Rotate $K_3$ around $l$ so that the $x$ coordinate of $v_3$ decreases and so that $D_3$ is in general position with respect to both $D_1$ and $D_2$ (although $D_1$ and $D_2$ are still not in general position with respect to each other).  Rotating by a small enough angle will ensure that no point on $K_3$ or $D_3$ moves more than $\epsilon$.
The rotation will fix $p_e$ and $p_f$, will cause all the points on the same side of $l$ as $v_3$ to have decreasing $x$ coordinates and all the  points of $e_3 \cup f_3$ on the other side to have increasing $x$ coordinates.
Before rotating, $D_3$ intersected $D_1$ in a single arc, a subset of $e_1$  running from $e_3 \cap e_1$ to $v_1$, including the single point
$l \cap D_1$.  After the rotation $D_1 \cap D_3$ will remain an arc, $l \cap D_1$ will be one endpoint, and the arc of intersection will rotate about this point.  The other end point will move away from the origin ($v_1$) to a point on $f_1$ with positive $x$ coordinate as in Figure~\ref{fig:brstick3}.

Before rotating $D_3$, $D_2 \cap D_3$ was a triangular subset of $P$ formed by intersecting the triangle subset of $D_2$ running from $e_2$ to $f_2$ and the analogous triangle on $D_3$ from $e_3$ to $f_3$.  The two triangles and thus the intersection contained the portion of $l$ running from $p_e$ to $p_f$.  After rotating this portion of $l$ will be the only portion of $D_3$ near the origin contained in $P$.  Since we have already established that all intersections will occur near the origin this means that $D_2 \cap D_3$ is exactly the arc of $l$ running from $p_e$ to $p_f$.   Now $D_3$ is in general position with respect to both $D_1$ and $D_2$.

Finally we must translate $K_2$ and $D_2$ slightly  so that $D_1 \cap D_2$ is in general position.  This will move $D_2 \cap D_3$ infinitesimally, but since these two disks are already in general position and the move will be minimal it will not be enough to change the intersection pattern of those two topologically so for our purposes we may think of it as essentially unchanged.  Before translating $D_2 \cap D_1$ is the subset of the edge $e_1$ running from $e_1 \cap e_2$ to $v_1 = e_1 \cap f_2$.
  Let ${\vec u}$ be a vector in the $xy$-plane with tail at the origin ($v_1$) and head on the triangular portion of $D_1$ between $e_1$ and $f_1$ as in Figure~\ref{fig:rotated}.  Translate $K_2$ by $\rho * {\vec u}$
where $|\rho * {\vec u}|$ is small enough to satisfy Claim~\ref{claim:simpint}.
Since $D_2$ and $D_3$ were in general position and our translation was minimal,
 $D_2 \cap D_3$ remains an arc as before (although it is no longer a subset of $l$).  $D_1$ and $D_2$ now are in general position and $D_1 \cap D_2$ becomes an arc from $e_2 \cap D_1$ to $f_2 \cap D_1$ that is parallel to, but now disjoint from $e_1$.

   All the disks are now in general position and the link looks locally like Figure~\ref{fig:brstick}.   The disks now each intersect the union of the other two in a cross as in Figure~\ref{fig:brstick5}.  It is not hard to show that this intersection pattern can only result from the Borromean Rings.  See, for example, \cite{H2}.  This concludes the proof of the two main theorems.

\bigskip

Although no set of three circles can be used to form the Borromean rings, Theorem~\ref{thm:polyplanar} has the following interesting Corollary.

\begin{cor}
Given any three circles, $C_1$, $C_2$, and $C_3$ and any $\epsilon >0$ there exists unknots
$K_1$, $K_2$, and $K_3$ with each $K_i$ contained in an $\epsilon$ tubular neighborhood of $C_i$  and isotopic to $C_i$ in that neighborhood, such that the Borromean Rings may be formed from $K_1 \cup K_2 \cup K_3$ through rigid motions of the components in $\R^3$.
\end{cor}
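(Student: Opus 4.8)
The plan is to reduce the corollary directly to Theorem~\ref{thm:polyplanar} by replacing each circle with an inscribed polygon, so that the work amounts to checking that a sufficiently fine polygonal approximation meets the two stated constraints (staying in the $\epsilon$-neighborhood and being isotopic to the circle there). First I would fix a large integer $n$ and, for each $i$, inscribe a regular $n$-gon $K_i$ in the circle $C_i$ by placing its vertices on $C_i$, so that its edges are chords. Each such $K_i$ is a convex planar polygon, hence a polygonal unknot, and it lies in the plane of $C_i$.

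Next I would verify that for $n$ sufficiently large each $K_i$ lies inside the prescribed $\epsilon$-tubular neighborhood $N_i$ of $C_i$. If $r_i$ denotes the radius of $C_i$, the greatest distance from a chord of the inscribed regular $n$-gon to the circle is $r_i\bigl(1-\cos(\pi/n)\bigr)$, attained at the chord midpoints, and this tends to $0$ as $n\to\infty$. Choosing $n$ large enough that this quantity is less than $\epsilon$ for all three circles simultaneously guarantees $K_i\subset N_i$ for every $i$.

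For the isotopy condition I would observe that the plane containing $C_i$ meets $N_i$ in a planar annulus $A_i$, namely the set of points of that plane within distance $\epsilon$ of $C_i$ (an honest annulus once $\epsilon<r_i$). For $n$ large the inscribed polygon $K_i$ lies in $A_i$, and both $K_i$ and $C_i$ are core-parallel simple closed curves in this planar annulus; pushing $K_i$ radially outward across $A_i$ onto $C_i$ is then an isotopy carried out entirely within $A_i\subset N_i$. Hence each $K_i$ is isotopic to $C_i$ inside $N_i$, as required.

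Finally, since all three $K_i$ are planar, in particular at least two of them are, so Theorem~\ref{thm:polyplanar} applies and shows that the Borromean rings may be formed from $K_1\cup K_2\cup K_3$ through rigid motions of the individual components, with no scaling needed. The only step requiring genuine care is the isotopy-in-the-neighborhood verification of the previous paragraph, since one must keep both the curve-placement and the straightening isotopy confined to the planar annulus $A_i$; every other point is an immediate appeal to Theorem~\ref{thm:polyplanar}.
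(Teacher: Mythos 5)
Your proposal is correct and follows exactly the paper's approach: the paper's own proof simply picks planar polygonal unknots arbitrarily close to each $C_i$ and invokes Theorem~\ref{thm:polyplanar}, which applies since all three approximants are planar. Your inscribed-$n$-gon construction, sagitta estimate, and radial isotopy in the planar annulus are just an explicit verification of the details the paper leaves implicit.
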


This follows immediately by picking planar polygonal unknots arbitrarily close to each $C_i$.  It then shows that while the Borromean Rings can't be formed out of three circles, they can in some sense come arbitrarily close to forming them.

%

\begin{figure}
	\centering
	\includegraphics[width=0.45\textwidth]{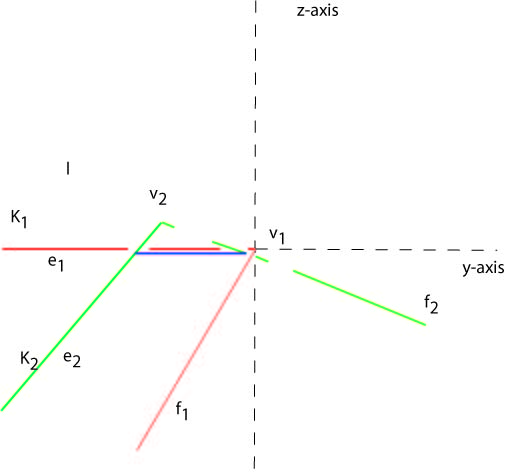}
	\includegraphics[width=0.45\textwidth]{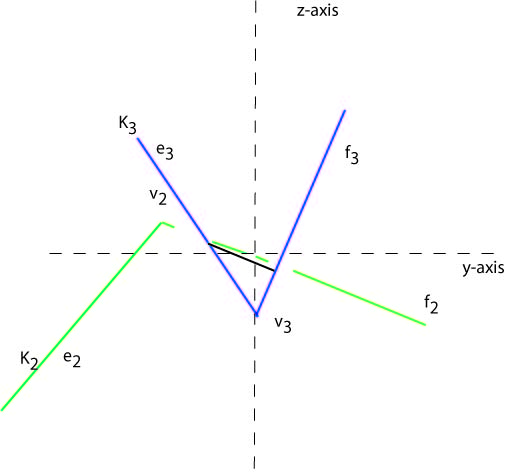}
\includegraphics[width=0.45\textwidth]{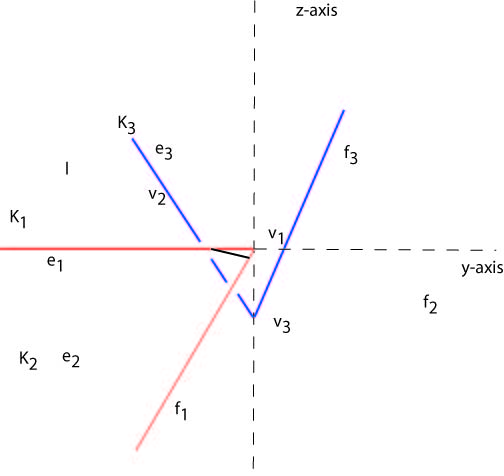}
	\caption{The Intersections of the disks in the final format $D_1 \cap D_2$, $D_1 \cap D_3$, and $D_2 \cap D_3$
	respectively are shown in black. Note that each time the black arc has end points on one of the knots (in the top left $K_1$, top right $K_2$, and bottom $K_3$) and on the interior of the disk bounded by the other knot.
 If all three arcs were drawn in the same picture we would see that the top two form a cross in the $xy$-plane intersecting in a single point on the interior of both arcs.  The third arc intersects the $xy$-plane
they would cross in a single point on its interior, which also is the unique point where it intersects the other two arcs. }
	\label{fig:brstick3}
	\end{figure}

\begin{figure}
	\centering
	\includegraphics[width=0.25\textwidth]{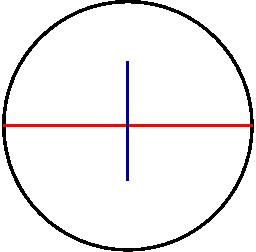}
	\caption{$D_r \cap (D_s \cup D_t)$ looks like the figure above
	for any distinct choices of $r,s,t \in \{1,2,3\}$.  This can only happen in the case of the
	Borromean rings.}
	\label{fig:brstick5}

	\end{figure}

\section{Conjectures, open questions, and a possible counter-example to Conjecture~\ref{conj:cook}}

%
%

In spite of our evidence in {\em partial} support of Cook's conjecture, we now state a conjecture of our own that would contradict it.

\begin{conjecture}
It is possible to find three unknotted curves, one of which is not a circle that cannot be used to form the Borromean rings. (Here scaling is not allowed.)
\end{conjecture}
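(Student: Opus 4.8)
The plan is to produce a single explicit triple together with a genuine linking obstruction valid for \emph{every} rigid placement, rather than merely checking that the flat-disk construction of Theorems~\ref{thm:poly} and~\ref{thm:polyplanar} breaks down; the latter would prove nothing, since that construction is only one route to the cross pattern of Figure~\ref{fig:brstick5}. Theorem~\ref{thm:poly} shows that scaling removes all obstructions for polygonal triples, and the same flat-disk method would plausibly dispatch smooth curves once scaling is allowed, so the candidate must be one for which forbidding scaling is truly essential. Likewise Theorem~\ref{thm:polyplanar} eliminates the need for scaling as soon as two components are planar, so the candidate should have at most one planar component. I would therefore search among triples consisting of two round circles $C_1,C_2$ and one non-circular, \emph{non-planar} curve $K$ of fixed size, with scaling forbidden. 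This is the configuration closest to the only known failure of Cook's Conjecture~\ref{conj:cook}, three round circles (\cite{F},~\cite{H1}), and hence the most promising place for an obstruction to survive.

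The engine of the argument would be an adaptation of the Freedman--Skora obstruction. That obstruction exploits the rigidity of round circles: each bounds a canonical flat disk, and the pairwise intersection geometry of such disks is so constrained that the alternating crossing pattern of Figure~\ref{fig:brstick5} cannot be realized. I would carry this out in two steps. First, a \emph{rigidity lemma} for the chosen $K$: show that, at the fixed size and shape of $K$, any placement in which $K$ is pairwise unlinked from $C_1$ and from $C_2$ forces the same geometric restriction on the available disks that roundness forces for circles --- for instance, that the relevant intersection arcs must be chords of bounded curvature, or that $K$ cannot ``clasp'' a round circle of the competing radius. Second, re-run the Freedman--Skora bookkeeping on the three so-constrained disks to conclude that the cross pattern, and hence the Borromean rings, cannot appear. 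The ``no scaling'' hypothesis enters precisely in the rigidity lemma, where fixed sizes are what rule out the clasping that a rescaled curve could always manufacture.

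The hard part --- and the reason this remains a conjecture rather than a theorem --- is the rigidity lemma for a curve that is \emph{not} round. Non-circular curves are flexible enough to clasp, and that flexibility is exactly what the positive results convert into the Borromean rings: the corollary to Theorem~\ref{thm:polyplanar} shows that an arbitrarily small, planar deviation from a circle already suffices. Thus $K$ must be chosen so that its departure from roundness is real (respecting Cook's ``unless they are all circles'') yet, being non-planar and of fixed size, insufficient to clasp a round circle of the competing radius. A further difficulty is that one cannot normalize by M\"obius transformations as in the classical round-circle proof, since these destroy both non-circularity and fixed size; the conformal step must be replaced by a direct Euclidean estimate. I would attack the problem first numerically, searching over rigid placements of candidates such as a slightly out-of-plane ``wobbled'' circle against two unit circles; a robust numerical failure to form the rings would both confirm the right candidate and reveal the quantitative inequality relating the radii and the out-of-plane amplitude that the rigidity lemma must ultimately encode.
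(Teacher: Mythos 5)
The statement you were given is a \emph{conjecture}, and the paper does not prove it: it only exhibits a candidate example (due to Cantarella) and gives heuristic reasons to believe it works. Your proposal is likewise a research program rather than a proof --- you say so yourself --- so neither you nor the paper settles the statement, and the comparison can only be between your proposed candidate-plus-obstruction strategy and the paper's proposed candidate. At that level your overall framing matches the paper's: both single out a triple with two round circles and one non-circular component, both insist that one needs an obstruction valid for \emph{every} rigid placement (not merely a failure of the construction in Theorems~\ref{thm:poly} and~\ref{thm:polyplanar}), and both identify the prohibition on scaling as the load-bearing hypothesis.

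The genuine gap is in your choice of candidate, and it is exactly the subtlety the paper's last section is about. You propose a ``slightly out-of-plane wobbled circle'' of small amplitude against two unit circles. But the paper's own results indicate that small-amplitude deviations from roundness are precisely the perturbations that \emph{do} form the Borromean rings: the corollary to Theorem~\ref{thm:polyplanar} produces working triples inside arbitrarily thin tubular neighborhoods of three circles, and the closing remark observes that even the computer-rendered version of the candidate curve --- which deviates from circular arcs only by infinitesimally short straight segments --- would form the rings without scaling. Cantarella's candidate avoids this trap by a different mechanism, which is the idea missing from your plan: his curve $K_3$ (Figure~\ref{fig:untorus}) is built from \emph{genuine} arcs of meridian circles of a small torus, with all non-circular arcs confined to a tiny neighborhood, while the two circles $K_1,K_2$ have radius large compared to the torus. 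Thus at the fixed sizes involved, every portion of $K_3$ that the other components can geometrically engage is an honest circular arc; the non-circularity is not small in amplitude, it is hidden at small scale. Your proposed rigidity lemma would have to rule out clasping for a curve whose visible geometry is flexible (a wobbled circle clasps a circle just fine), whereas for Cantarella's curve the hoped-for lemma reduces, at least heuristically, to the known rigidity of circular arcs. If you pursue this, replace your candidate with one of this ``circular-arcs-only-exposed'' type; otherwise the numerical search you describe will most likely succeed in forming the rings and refute your own candidate rather than support the conjecture.
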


Jason Cantarella suggests the following example as likely to satisfy this new conjecture (and arguably a counter-example to Cook's original conjecture).  Let $T$, shown in Figure~\ref{fig:torus}, be a torus that can be parameterized as follows:

$x= 5.1*\cos(\theta)+5*\cos(\psi)*\cos(\theta), y= 5.1*\sin(\theta)+5*\cos(\psi)*sin(\theta), z=5*\sin(\psi)], 0 \leq \psi \leq  2*\pi,  0 \leq \theta \leq  2*\pi$.

Let $L$ consist of two circles $K_1$ and $K_2$ of radius 10 together with an unknot $K_3$ that is isotopic {\em on $T$} to an $(n,1)$ torus knot, but consisting of $n$ arcs of meridian circles outside of a small neighborhood of the origin and then short arcs on the torus connecting adjacent arcs to complete a single knot.  An example of such a knot with $n=18$ is shown in Figure~\ref{fig:untorus}.  The knot consists of meridian curves like those in in Figure~\ref{fig:torus} away from a neighborhood of the origin.  Within a neighborhood of the origin there will be short, non-circular arcs.   These are in the dense central portion of the bottom picture in  Figure~\ref{fig:torus}.

Now because of the relatively large radii of $K_1$ and $K_2$ compared to the size of $T$, from their perspective the exposed portions of $K_3$ consist exclusively of arcs of circles.  The interactions in the construction of Theorem~\ref{thm:poly} are local and cannot work in this situation for the same reasons it would not work with three circles.  It is not as clear that the construction from Theorem~\ref{thm:polyplanar}, however, could not work, where scaling was allowed.  If we are allowed to scale the components, then if we scale $K_3$ up enough, then the non-circular portions become exposed to the other knots.  This, therefore, might be a counter-example to  Conjecture~\ref{conj:cook} if  scaling is not permitted, but fails to produce a counterexample if it is.   The exact phrasing of Cook's conjecture is ambiguous since the word ``arranged" could be interpreted to allow scaling or it could be interpreted not to, but it seems more likely to exclude scaling.    It is highly possible that this nuance could be the difference between the conjecture being true or false!

It is also worth noting that while from afar Figure~\ref{fig:untorus} appears to consist of arcs of circles as desired, due to the nature of computer generated images, if you truly look closely the knot pictured consists of straight segments that are very, very short.  No matter how short they are, this means that two computer generated ``circles," which also would consist of short straight segments, but would look like circles, together with the knot pictured would in truth be able to form the Borromean Rings without scaling by Theorem~\ref{thm:polyplanar}.  This shows how subtle the line is between knots that can form the Borromean Rings and those that cannot.

\begin{figure}
	\centering
	\includegraphics[width=0.75\textwidth]{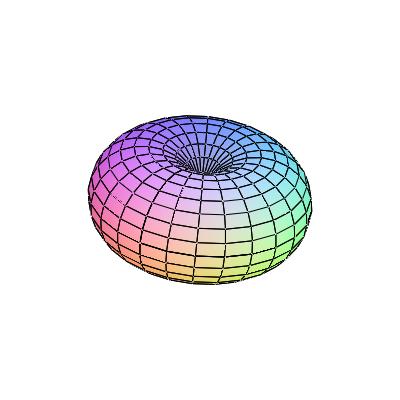}
	\caption{A torus.}
	\label{fig:torus}

	\end{figure}

\begin{figure}
	\centering
	\includegraphics[width=0.65\textwidth]{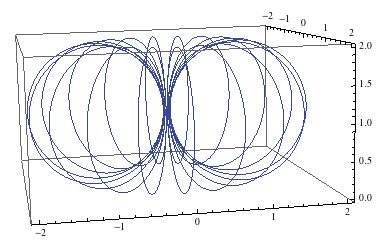}
	\includegraphics[width=0.65\textwidth]{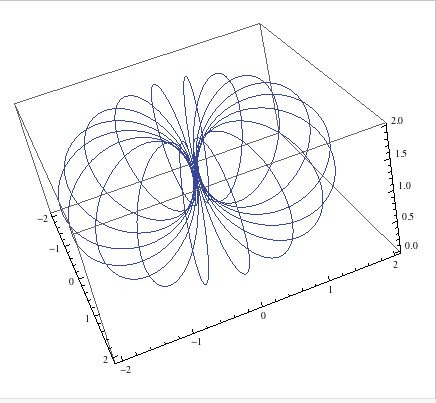}
	\includegraphics[width=0.65\textwidth]{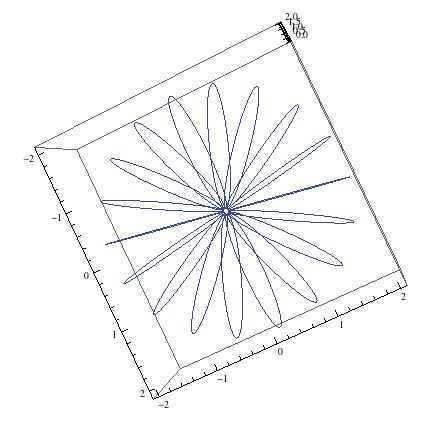}
	\caption{Various perspectives of an unknot embedded on a torus.}
	\label{fig:untorus}

	\end{figure}

We conclude with a few more open questions and conjectures.

\begin{conjecture}
Any three planar curves can be used to form the Borromean rings as long as at least one is not a circle.
\end{conjecture}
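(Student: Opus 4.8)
The plan is to reduce the statement to a question about \emph{flat} disks and then argue that the only planar curve which can obstruct the construction is the round circle. A planar simple closed curve bounds a planar region, so if $K_1,K_2,K_3$ all lie in planes then $D_1,D_2,D_3$ are flat; when the disks are convex their pairwise intersections are straight chords (the intersection of the two supporting planes, clipped to both disks). Thus forming the Borromean rings amounts to positioning three flat disks so that the three chords $D_i\cap D_j$ realize the cross pattern of Figure~\ref{fig:brstick5}. Theorem~\ref{thm:polyplanar} already produces exactly this pattern whenever the curves have corners, so the only genuinely new case is that of smooth curves; and among smooth convex curves the circle is singled out by having constant curvature. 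My first step, therefore, is to isolate the circle as the unique obstruction and to show that \emph{any} failure of constant curvature can be made to play the role that the angle deficit at a vertex $v_i$ plays in the proofs of Theorems~\ref{thm:poly} and~\ref{thm:polyplanar}.

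For the main (convex) case I would take the non-circular curve to be $K_3$. By the four-vertex theorem its curvature is nonconstant, so it has a point $p$ of strictly minimal curvature, where the curve is flattest and hugs its tangent line more closely than at any other point; this long ``nearly straight'' arc will serve as a surrogate for a long straight edge. Placing $p$ at the origin and arranging $K_1$ and $K_2$ (which may themselves be round) so that their chords of intersection with $D_3$ run along this flat arc, I would then run the same sequence of motions used in the main proof: translate $K_2$ by $\epsilon\,\vec{w}$, translate $K_3$ by $\delta\,\vec{v}$, rotate $K_3$ about a line $l$ parallel to the flat arc, and finally translate $K_2$ by $\rho\,\vec{u}$, with all magnitudes kept below the disjointness budget $d/2$ of the setup so that Claim~\ref{claim:simpint} still applies and no intersections appear away from the origin. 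The curvature deficit at $p$ supplies exactly the transversality that two non-collinear edges supplied before: it lets the rotation about $l$ separate $D_2\cap D_3$ from $D_1\cap D_3$ and swing the chords into a transverse cross, yielding the pattern of Figure~\ref{fig:brstick5}.

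If the non-circle is \emph{non-convex} the argument is easier, and I would dispatch this case first. A reflex (concave) sub-arc of $K_3$ lets a neighboring disk enter $D_3$ from two independent directions, directly mimicking the two edges $e_i,f_i$ of a polygonal corner; running the polygonal construction with these surrogate edges, and with $K_1,K_2$ positioned as in Corollary~\ref{cor:planar}, produces the cross pattern without any appeal to curvature estimates.

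The hard part will be uniformity as $K_3$ degenerates to a round circle. As the curvature deficit $\kappa_{\max}-\kappa_{\min}$ tends to $0$ the surrogate ``edge'' at $p$ becomes arbitrarily short and the admissible magnitudes of $\vec{w},\vec{v},\vec{u}$ and of the rotation about $l$ all shrink with it, precisely in the regime where Freedman and Skora forbid the round case. I would therefore need to show that the required motions scale with the deficit but never require it to vanish, so that every positive deficit --- however small --- still fits inside the budget $d/2$ and still completes the cross. Establishing this controlled, deficit-dependent construction, and thereby exhibiting a procedure that is continuous on non-circular curves yet genuinely discontinuous at the circle (where it must fail), is where essentially all of the difficulty lies.
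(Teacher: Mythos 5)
This statement is not a theorem of the paper at all: it is stated as an open \emph{conjecture} in the final section, and the paper offers no proof of it. The only related result the paper cites is \cite{H3}, which handles three planar unknots when at least one is \emph{non-convex} (and even there rigid motions must be supplemented by scaling); the smooth convex case --- e.g.\ two circles and a nearby non-circular convex oval --- is left open. So there is no paper proof to compare yours against, and the real question is whether your proposal closes the gap. It does not: by your own account, the entire difficulty is concentrated in the final paragraph (uniformity of the construction as the curvature deficit $\kappa_{\max}-\kappa_{\min}$ tends to zero), and that step is announced rather than carried out. A plan whose decisive step is deferred is not a proof of an open conjecture.

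Beyond the admitted incompleteness, the central analogy is unsubstantiated. The proofs of Theorems~\ref{thm:poly} and~\ref{thm:polyplanar} lean on a genuinely discrete feature: at the extremal vertex $v_i$, two non-collinear straight edges $e_i, f_i$ meet at a corner, the disk $D_i$ is flat and triangular between them, and every subsequent move (the translations by $\epsilon\vec{w}$, $\delta\vec{v}$, $\rho\vec{u}$ and the rotation about $l$) exploits the positive angle at that corner to separate and transversalize the arcs of intersection. A smooth convex curve has no corner anywhere; near a point of minimal curvature it agrees with its osculating circle to second order, which is precisely the local geometry of the case Freedman and Skora rule out. Your claim that ``the curvature deficit at $p$ supplies exactly the transversality that two non-collinear edges supplied'' is therefore the whole theorem in disguise, not a reduction of it: you would need a quantitative statement showing that a third-order deviation from circularity permits motions of some definite (deficit-dependent) size that still realize the cross pattern of Figure~\ref{fig:brstick5}, together with an argument that these shrinking motions remain compatible with the fixed geometry of $K_1$ and $K_2$. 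Nothing in the polygonal machinery (Claim~\ref{claim:simpint}, Corollary~\ref{cor:planar}) addresses this, since that machinery presupposes the flat triangular neighborhoods that smooth curves lack. The non-convex case you call ``easier'' is likewise not free --- it is the subject of the separate paper \cite{H3}, and the scaling used there may not even be permitted under the conjecture's intended reading.
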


Planar was convenient and was necessary at times for the proofs in \cite{H3}, where it is shown that any three planar unknots (here they need not by polygonal) can always be used to form the Borromean rings through rigid transformations and scaling as long as one of them is not convex, but it is not clear that the theorem fails without it even if this proof does.

\begin{q}
Can any three unknots can be used to form the Borromean rings through rigid transformations and scaling applied to the individual components as long as at least one is not a circle?
\end{q}

Thanks to Jason Cantarella for the idea behind the link represented in Figure~\ref{fig:untorus} and to Matt Mastin for generating the images in Figure~\ref{fig:untorus}.

\bigskip\bigskip

\centerline{\textsc{References cited}}

\bigskip\bigskip

\catcode`\@=11
\def\@biblabel#1{\@ifnotempty{#1}{[\bfseries #1]}}

\renewenvironment{thebibliography}[1]{%
   \normalfont
   \labelsep .5em\relax
   \renewcommand\theenumiv{\arabic{enumiv}}\let\p@enumiv\@empty
   \list{\@biblabel{\theenumiv}}{\settowidth\labelwidth{\@biblabel{#1}}%
     \leftmargin\labelwidth \advance\leftmargin\labelsep
     \usecounter{enumiv}}%
   \sloppy \clubpenalty\@M \widowpenalty\clubpenalty
   \sfcode`\.=\@m
}{%
   \def\@noitemerr{\@latex@warning{Empty `thebibliography'  
environment}}%
   \endlist
}

\newcommand{\bibsub}[1]{${}_{\mathbf{#1}}$}

\catcode`\@=12

\end{document}